\renewcommand{\leq}{\leqslant}
\renewcommand{\geq}{\geqslant}
\newcommand{\R}{\mathbb{R}}
\newcommand{\dd}{\textrm{d}}
\newcommand{\TT}{\mathcal{T}}
\newcommand{\XX}{X}
\newenvironment{proof}[1][\!]{\noindent {\em Proof #1. }}{\hfill $\blacksquare$ \vskip 3pt}
\newcommand{\AC}{}
\newcommand{\startmodif}{\begingroup}
\newcommand{\stopmodif}{\endgroup}
\newcommand{\startmodiff}{\begingroup}
\newcommand{\stopmodiff}{\endgroup}
\newcommand{\maptau}{G}
\newtheorem{thm}{Theorem}[section]
\newtheorem{lem}[thm]{Lemma}
\title{Existence of an equilibrium for delayed neural fields under output proportional feedback
} 
\author[1]{Lucas Brivadis}
\author[1]{Cyprien Tamekue}
\author[1]{Antoine Chaillet}
\author[1]{Jean Auriol}
\affil[1]{
Université Paris-Saclay, CNRS, CentraleSupélec, Laboratoire des Signaux et Systèmes, 91190, Gif-sur-Yvette, France. Emails:
\texttt{lucas.brivadis@centralesupelec.fr},
\texttt{cyprien.tamekue@centralesupelec.fr},
\texttt{antoine.chaillet@centralesupelec.fr},
\texttt{jean.auriol@centralesupelec.fr}.}
\date{\today}
\begin{document}

\maketitle




\paragraph{Keywords.} fixed point,
delayed neural fields, robust stabilization, input-to-state stability, spatiotemporal delayed systems.

\paragraph{Abstract.}                         
Recently, \cite{CHAILLET2017262} proved that the closed-loop system resulting from the output proportional feedback stabilization of a class of delayed neural fields is input-to-state stable (ISS) for sufficiently high gain, subject to the existence of an equilibrium point for the closed-loop system.
In the present paper, we show that a sufficient condition for such an equilibrium to exist is that the activation functions are bounded.

\section{Introduction}

In \cite[Section 4]{CHAILLET2017262}, the following delayed neural fields are considered:
\begin{subequations}\label{eq:z}
\begin{align}
&\begin{multlined}\label{eq:z1}
    \tau_1(r)\frac{\partial z_1}{\partial t}(r, t)
    =
    -z_1(r, t)
    +S_1\Bigg(
    I_1^\star(r) + \AC{\alpha(r)}u(r,t)
    +\sum_{j=1}^2\int_\Omega w_{1j}(r, r')z_j(r', t-d_j(r, r'))\dd r'
    \Bigg),
\end{multlined}\\
&\begin{multlined}\label{eq:z2}
    \tau_2(r)\frac{\partial z_2}{\partial t}(r, t)
    =
    -z_2(r, t)
    +S_2\Bigg(I_2^\star(r)
    +\sum_{j=1}^2\int_\Omega w_{2j}(r, r')z_j(r', t-d_j(r, r'))\dd r' 
    \Bigg),
\end{multlined}
\end{align}
\end{subequations}
where $\Omega\subset\R^q$ is a compact set,
$z_i(r, t)$ is the neural activity at position $r\in\Omega$ and time $t\in\R_+$ of population $i\in\{1, 2\}$, \AC{$\tau_i:\Omega\to \mathbb R_{>0}$ is a time constant distribution,} 
$S_i:\R\to\R$ is a non-decreasing continuous function,
$w_{ij}\in L^2(\Omega^2; \R)$ 
\startmodiff
represents the synaptic coupling distribution for $i,j\in\{1, 2\}$,
\stopmodiff
$d_j:\Omega^2\to[0, \bar{d}]$ for some $\bar{d}\geq0$,  
$u:\Omega\times\R_+\to\R$ is the controlled input, \AC{$\alpha:\Omega\to\R_+$ is a bounded function reflecting the in-homogeneity of the received input}, and $I_i^\star\in L^2(\Omega; \R)$ is a constant uncontrolled input.
\startmodiff
The aim of [2] is to disrupt pathological brain oscillations related to Parkinson's disease by relying
on stimulation and measurements of the first neuronal population only. To that aim, 
\stopmodiff
the system is controlled in closed loop with a partial proportional feedback:
\begin{equation}\label{eq:cont}
    u(r, t) = - k(z_1(r, t)-z_{\textit{ref}}(r)).
\end{equation}
where $k\in\R_+$ \AC{denotes the proportional gain}  and $z_\textit{ref}:\Omega\to\R$ \AC{is a target distribution}.
In order to investigate the robust \AC{stability} of the closed-loop system,
the authors assume\footnote{
More precisely, on page 266 of \cite{CHAILLET2017262}:
``
For now on, we simply assumed that such an equilibrium
exists.
''
} \emph{a priori} the existence of an equilibrium point $(z_1^\star, z_2^\star)\in L^2(\Omega; \R)^2$ for \eqref{eq:z}-\eqref{eq:cont}, at which they aim to stabilize the system. A similar assumption was made in \cite{10.3389/fnins.2015.00237},
\startmodif
which investigates the same closed-loop.
\stopmodif

\AC{The existence of such an equilibrium in the absence of proportional control can be established by invoking \cite[Theorem 3.6]{faugeras}, which exploits compactness arguments. As stressed in \cite{CHAILLET2017262}, this result cannot readily be invoked for \eqref{eq:z}-\eqref{eq:cont}. As a matter of fact, the control law \eqref{eq:cont} does not define a compact operator, thus making the use of Schaefer's fixed point theorem more delicate.}

\AC{In this note, we provide mild conditions under which such an equilibrium exists. We show in particular that boundedness of the activation functions $S_i$ is enough to guarantee the existence of an equilibrium.}

\section{Main result}

Our main result is the following.
\begin{thm}\label{th:main}
\AC{Let $\Omega$ be a compact set of $\mathbb R^q$, $q\in\mathbb N$. Given any $i,j\in\{1,2\}$, let $I^\star_i\in L^2(\Omega,\mathbb R)$, $\tau_i:\Omega\to \mathbb R_{>0}$, $d_i:\Omega^2\to[0, \bar{d}]$ for some $\bar{d}\geq0$, $w_{ij}\in L^2(\Omega^2,\mathbb R)$, $\alpha:\Omega\to\R_+$ be a bounded function, and  $z_{ref}\in L^2(\Omega,\R)$.} If \AC{$k\geq 0$ and} $S_i:\mathbb R\to\mathbb R$ is a continuous bounded function for each $i\in\{1, 2\}$, then \AC{the closed-loop system \eqref{eq:z}-\eqref{eq:cont}
\startmodif
admits at least one equilibrium 
\stopmodif
in $L^2(\Omega; \R)^2$.}
\end{thm}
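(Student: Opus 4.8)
I would recast the equilibrium condition as a fixed point problem and apply Schauder's theorem; the obstacle anticipated in the introduction is that the output feedback $-k\alpha z_1$ contributes a bounded but non-compact operator, and the idea is to remove it by a pointwise nonlinear inversion. First, a constant-in-time solution $(z_1^\star,z_2^\star)$ of \eqref{eq:z}--\eqref{eq:cont} is exactly an $L^2(\Omega;\R)^2$ solution of
\[
z_1^\star(r)=S_1\!\Big(I_1^\star(r)-k\alpha(r)\big(z_1^\star(r)-z_{ref}(r)\big)+(W_{11}z_1^\star)(r)+(W_{12}z_2^\star)(r)\Big),
\]
\[
z_2^\star(r)=S_2\!\big(I_2^\star(r)+(W_{21}z_1^\star)(r)+(W_{22}z_2^\star)(r)\big),
\]
for a.e.\ $r\in\Omega$, where $W_{ij}v:=\int_\Omega w_{ij}(\cdot,r')v(r')\,\dd r'$ is, since $w_{ij}\in L^2(\Omega^2;\R)$, a Hilbert--Schmidt (hence compact) operator on $L^2(\Omega;\R)$. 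It thus suffices to exhibit one such solution; note that since $|\Omega|<\infty$ and $S_i$ is bounded, any solution automatically lies in $L^2$ (even $L^\infty$).

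The key step handles the term $-k\alpha z_1^\star$ inside $S_1$. Using that $S_1$ is non-decreasing (as in \eqref{eq:z}) and $k\alpha(r)\ge0$, the map $g_r\colon s\mapsto s+k\alpha(r)S_1(s)$ is, for each $r$, a continuous strictly increasing bijection of $\R$ onto $\R$ (surjectivity because the bounded term $k\alpha(r)S_1(s)$ is dominated by $s$ at infinity). Let $h(r,\cdot):=g_r^{-1}$. From $(s-s')\big(g_r(s)-g_r(s')\big)\ge(s-s')^2$ one gets that $h(r,\cdot)$ is $1$-Lipschitz uniformly in $r$, and from $g_r(h(r,0))=0$ that $|h(r,0)|\le k\,(\sup_\Omega\alpha)(\sup_\R|S_1|)$, so $|h(r,y)|\le|y|+C$ for a constant $C$; also $h$ is a Carathéodory function, so $v\mapsto h(\cdot,v)$ is a well-defined $1$-Lipschitz self-map of $L^2(\Omega;\R)$. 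Writing $z_1^\star=S_1(y_1^\star)$, $z_2^\star=S_2(y_2^\star)$, the equilibrium equations become the fixed point equation $(y_1^\star,y_2^\star)=F(y_1^\star,y_2^\star)$ on $L^2(\Omega;\R)^2$, where
\[
F(y_1,y_2):=\Big(h\big(\cdot,\ I_1^\star+k\alpha z_{ref}+W_{11}(S_1\circ y_1)+W_{12}(S_2\circ y_2)\big),\ I_2^\star+W_{21}(S_1\circ y_1)+W_{22}(S_2\circ y_2)\Big).
\]

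It then remains to check the hypotheses of Schauder's theorem for $F$. The Nemytskii operators $y\mapsto S_i\circ y$ are continuous from $L^2(\Omega;\R)$ into itself (by dominated convergence along a.e.-convergent subsequences, using $|S_i|$ bounded and $|\Omega|<\infty$) with range in a fixed ball; composing with the compact operators $W_{ij}$ makes $(y_1,y_2)\mapsto\sum_j W_{ij}(S_j\circ y_j)$ map bounded sets to relatively compact sets, and adding the fixed functions $I_i^\star$ and $k\alpha z_{ref}$ preserves this; finally $v\mapsto h(\cdot,v)$ is $1$-Lipschitz, hence continuous and relative-compactness preserving. So $F$ is continuous and maps bounded sets to relatively compact ones. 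Moreover $\|S_i\circ y_i\|_{L^2}\le\sqrt{|\Omega|}\,\sup_\R|S_i|$ and $|h(r,y)|\le|y|+C$ give a bound on $\|F(y_1,y_2)\|$ independent of $(y_1,y_2)$, so $F$ maps $L^2(\Omega;\R)^2$ into a fixed closed bounded convex set $\mathcal B$; Schauder's fixed point theorem applied to $F|_{\mathcal B}$ yields a fixed point, i.e.\ an equilibrium $(z_1^\star,z_2^\star)=(S_1\circ y_1^\star,S_2\circ y_2^\star)\in L^2(\Omega;\R)^2$. I expect the genuine difficulty to be exactly the one flagged in the introduction — the non-compactness of the proportional feedback — so that the crux is the monotone pointwise inversion $g_r\mapsto h$, which trades the non-compact linear feedback term for a uniformly Lipschitz outer nonlinearity; the measurability bookkeeping for $h$, the continuity of $S_i\circ(\cdot)$, and the self-map bound are routine.
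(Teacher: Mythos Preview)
Your argument is correct and follows the same core strategy as the paper: pass to the ``pre-activation'' variables $y_i$ with $z_i=S_i\circ y_i$, absorb the non-compact feedback term $-k\alpha S_1(y_1)$ by inverting the monotone map $\mathrm{Id}+\sigma$, and then apply a topological fixed point theorem to the resulting compact operator. The execution differs in two respects. First, the paper packages the inversion abstractly: it shows via maximal monotone operator theory (Barbu) that $H(x)=x+\sigma(x)$ has a continuous inverse on the Hilbert space $L^2(\Omega;\R)^2$, whereas you carry out the same inversion pointwise and elementarily, building $h(r,\cdot)=g_r^{-1}$ by scalar calculus and checking by hand that the induced Nemytskii map is $1$-Lipschitz on $L^2$. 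Second, the paper concludes with Schaefer's fixed point theorem (bounding the Leray--Schauder set $\{x:\ x=\lambda\pi(x)\}$), while you use Schauder directly after observing that $F$ sends all of $L^2(\Omega;\R)^2$ into a fixed ball. Your route is more self-contained and avoids the monotone-operator machinery, at the price of being tied to the Nemytskii structure of $\sigma$; the paper's Lemma is more abstract and would apply verbatim to feedback terms $\sigma$ that are monotone but not pointwise, which is the generality the authors highlight in their remarks.
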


In order to establish this result, we first observe that $(z_1^\star, z_2^\star)$ is an equilibrium point of \eqref{eq:z}-\eqref{eq:cont} if and only if it is a fixed point of the nonlinear map $T:L^2(\Omega; \R)^2\to L^2(\Omega; \R)^2$ defined by
$
    T(z_1, z_2) := (T_1(z_1, z_2), T_2(z_1, z_2)),
$
where
\begin{align*}
T_1
(z_1, z_2)
(r)
:=
S_1\Bigg(
I_1^\star(r) - k\alpha(r)(z_1(r)-z_{\textit{ref}}(r))
+\sum_{j=1}^2\int_\Omega w_{1j}(r, r')z_j(r')\dd r'
\Bigg),
\end{align*}
\begin{align*}
T_2
(z_1, z_2)
\startmodif
(r)
\stopmodif
:=
S_2\Bigg(I_2^\star(r)
+\sum_{j=1}^2\int_\Omega w_{2j}(r, r')z_j(r')\dd r'
\Bigg).
\end{align*}



\startmodif
Consider the map
\stopmodif
$\TT: L^2(\Omega; \R)^2\to L^2(\Omega; \R)^2$ defined by
$\TT(x_1, x_2) := (\TT_1(x_1, x_2), \TT_2(x_1, x_2))$, where
\begin{align*}
&\begin{multlined}
\TT_1
(x_1, x_2)\startmodif
(r)
\stopmodif
:=
I_1^\star(r) - k\alpha(r)(S_1(x_1(r))-z_{\textit{ref}}(r))
+\sum_{j=1}^2\int_\Omega w_{1j}(r, r')S_1(x_j(r'))\dd r',
\end{multlined}
\\
&\TT_2
(x_1, x_2)\startmodif
(r)
\stopmodif
:=
I_2^\star(r)
+\sum_{j=1}^2\int_\Omega w_{2j}(r, r')S_2(x_j(r'))\dd r'.
\end{align*}
Then $(z_1^\star, z_2^\star)$ is a fixed point of $T$ if and only if there exists a fixed point $(x_1^\star, x_2^\star)$ of $\TT$ such that $(z_1^\star, z_2^\star)=(S_1(x_1^\star), S_2(x_2^\star))$.
\startmodif
Indeed, if $(z_1^\star, z_2^\star)$ is a fixed point of $T$, then $(z_1^\star, z_2^\star)=(S_1(x_1^\star), S_2(x_2^\star))$ for some $(x_1^\star, x_2^\star)\in L^2(\Omega,\mathbb R)^2$ and direct computations yield that $(x_1^\star, x_2^\star)$ is a fixed point of $\TT$. Conversely, it also follows from the definitions of $T$ and $\TT$ that if $(x_1^\star, x_2^\star)$ is a fixed point of $\TT$, then $(S_1(x_1^\star), S_2(x_2^\star))$ is a fixed point of $T$.
\stopmodif

Hence, it is sufficient to find a fixed point of $\TT$ \AC{in $L^2(\Omega,\mathbb R)^2$} in order to prove Theorem~\ref{th:main}.
\startmodiff
This is ensured by the following lemma.
\stopmodiff

\begin{lem}\label{lem:main}
Let $\XX$ be a Hilbert space, $f\in\XX$, $W:\XX\to\XX$ be a continuous nonlinear compact operator,
$\rho:\XX\to\XX$ be
a continuous nonlinear uniformly bounded operator
and  $\sigma:\XX\to\XX$ be a continuous nonlinear monotone operator that maps bounded sets to bounded sets.
Then the map $\maptau:\XX\to\XX$ defined by
\begin{equation*}
    \maptau(x) := W(\rho(x)) - \sigma(x) + f
\end{equation*}
admits at least one fixed point in $\XX$.
\end{lem}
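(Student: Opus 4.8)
The plan is to deduce the existence of a fixed point of $\maptau$ from Schaefer's fixed point theorem (equivalently, the Leray--Schauder alternative), after first rewriting the fixed point equation $\maptau(x)=x$ as a fixed point equation for a genuinely \emph{compact} map. The obstacle is that $\sigma$ need not be compact, so $\maptau$ itself is not compact and Schaefer's theorem does not apply to it directly; the monotonicity of $\sigma$ is precisely what is available to circumvent this. The key observation is that $\maptau(x)=x$ is equivalent to $(\Id+\sigma)(x) = W(\rho(x)) + f$, and the operator $\Id+\sigma:\XX\to\XX$ is, by the theory of maximal monotone operators, a homeomorphism of $\XX$ onto $\XX$ whose inverse $(\Id+\sigma)^{-1}$ is (Lipschitz) continuous; indeed $\sigma$ continuous and monotone on all of $\XX$ is maximal monotone, hence $\Id+\sigma$ is surjective by Minty's theorem, and injectivity plus the estimate $\|(\Id+\sigma)(x)-(\Id+\sigma)(y)\|\,\|x-y\|\geq\|x-y\|^2$ give that the inverse is well-defined and $1$-Lipschitz.

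First I would record this: set $J:=(\Id+\sigma)^{-1}:\XX\to\XX$, which is single-valued and continuous. Then $x$ is a fixed point of $\maptau$ if and only if $x = J\bigl(W(\rho(x))+f\bigr)=:F(x)$. Next I would check that $F$ is a compact operator: $\rho$ is continuous, $W$ is continuous and compact, so $x\mapsto W(\rho(x))+f$ is continuous and maps bounded sets to relatively compact sets; composing on the outside with the continuous map $J$ preserves this, so $F$ is continuous and maps bounded sets into relatively compact sets, i.e. $F$ is a compact operator on $\XX$.

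Then I would invoke Schaefer's fixed point theorem: it remains to show that the set $\{x\in\XX : x=\lambda F(x)\text{ for some }\lambda\in[0,1]\}$ is bounded. Here I would use the uniform boundedness of $\rho$: let $M:=\sup_{x\in\XX}\|\rho(x)\|<\infty$, and let $C:=\sup_{\|y\|\le M}\|W(y)\|<\infty$, which is finite because $W$ maps the bounded set $\overline{B}(0,M)$ to a relatively compact, hence bounded, set. For any such $x$ and $\lambda$, writing $y:=F(x)$ we have $(\Id+\sigma)(y)=W(\rho(x))+f$, and since $x=\lambda y$, I would pair the identity with $y$ and use monotonicity of $\sigma$ in the form $\langle \sigma(y)-\sigma(0),y-0\rangle\ge 0$: this yields $\|y\|^2 \le \langle W(\rho(x))+f-\sigma(0),\,y\rangle \le (C+\|f\|+\|\sigma(0)\|)\,\|y\|$, hence $\|y\|\le C+\|f\|+\|\sigma(0)\|=:R$, and therefore $\|x\|=\lambda\|y\|\le R$. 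Thus the Leray--Schauder set is bounded (uniformly in $\lambda$), Schaefer's theorem applies to $F$, and the resulting fixed point of $F$ is the desired fixed point of $\maptau$.

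The main obstacle, as noted, is handling the non-compact term $\sigma$; the resolution is the maximal monotonicity of $\Id+\sigma$ (Minty--Browder), which lets one "invert" that term and absorb it into a compact fixed point problem. A secondary technical point is verifying that $\sigma$ continuous and everywhere-defined is maximal monotone and that the resolvent $J$ is continuous — this is standard (e.g. from Brezis), and the hypothesis that $\sigma$ maps bounded sets to bounded sets, together with continuity, guarantees everything needed for the boundedness estimate above to go through.
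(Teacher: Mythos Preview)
Your proposal is correct and follows essentially the same route as the paper: rewrite $G(x)=x$ as $(\Id+\sigma)(x)=W(\rho(x))+f$, use maximal monotonicity to obtain a continuous inverse $H^{-1}=(\Id+\sigma)^{-1}$, and apply Schaefer's theorem to the compact map $x\mapsto H^{-1}(W(\rho(x))+f)$. The only cosmetic differences are that the paper cites Barbu for the invertibility of $H$ (invoking the hypothesis that $\sigma$ maps bounded sets to bounded sets) rather than Minty's theorem, and establishes boundedness of the Leray--Schauder set by the one-line observation that $\pi(E)\subset H^{-1}(W(B)+f)$ is relatively compact, rather than via your explicit pairing estimate.
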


\begin{proof}
The proof is an adaptation of \cite[Theorem 3.6]{faugeras}, that dealt with the uncontrolled case (\textit{i.e.}, $k=0$). It is based on Schaefer's fixed point theorem.
Since $\sigma$ is continuous, monotone, and maps bounded sets to bounded sets, \AC{the map} $x\mapsto x/2+\sigma(x)$ is a maximal monotone operator on $\XX$ according to \cite[Chapter 2, Corollary 1.1]{barbu}. Hence the nonlinear map $H:\XX\to\XX$ defined by $H(x) := x + \sigma(x)$ has a continuous inverse $H^{-1}$ on $\XX$.
Consider the map $\pi:\XX\to\XX$ defined by
$\pi(x) := H^{-1}(W(\rho(x))+f)$.
Then $\pi$ is continuous and compact, since $H^{-1}$, $\rho$ and $W$ are continuous and $W$ is compact.
Set $E:=\{x\in\XX\mid \exists \lambda\in(0, 1),\, x = \lambda \pi(x)\}$.
Since $\rho$ is uniformly bounded, there exists a bounded set $B\subset X$ such that $\rho(E)\subset B$. Since $W$ is compact and $H^{-1}$ is continuous, $H^{-1}(W(B)+f)$ is a relatively compact set, hence $\pi(E)$ is bounded and so is $E$.
Thus, according to Schaefer's fixed point theorem, $\pi$ admits at least one fixed point $x^\star$ in $\XX$.
Then 
$H(x^\star) = W(\startmodiff\rho(x^\star)\stopmodiff)+f$,
{\it i.e.} $x^\star$ is a fixed point of $\maptau$.
\end{proof}

\begin{proof}[of Theorem~\ref{th:main}] To prove Theorem~\ref{th:main} from Lemma~\ref{lem:main},
we set $X = L^2(\Omega; \R)^2$,
$f = (I_1^\star + k\alpha z_{ref}, I_2^\star)$,
$W(x_1, x_2)(r) =
\sum_{j=1}^2
(
\langle w_{1j}(r, \cdot), x_j\rangle_{L^2},
\langle w_{2j}(r,\cdot), x_j\rangle
)$,
$\rho(x_1, x_2)$ $= (S_1(x_1), S_2(x_2))$,
$\sigma(x_1, x_2) = (k\alpha S_1(x_1), 0)$.
Then $\TT = \maptau$.
The operator $W$ is compact as a Hilbert–Schmidt integral operator (since the maps $w_{ij}$ 
\AC{are }in $L^2(\Omega^2, \R)$).
Moreover, $\rho$ is continuous and uniformly bounded since each $S_i$ is continuous and bounded by assumption.
Finally, $\sigma$ is continuous, monotone and maps bounded sets to bounded sets since $S_1$ is continuous, non-decreasing, and bounded, and $k\alpha\geq0$.
Therefore, all the assumptions of Lemma~\ref{lem:main} are satisfied, which ends the proof of Theorem~\ref{th:main}.
\end{proof}

\section{Remarks}

In \cite{CHAILLET2017262}, the maps $S_i$ are not assumed to be bounded. However, in most neural fields models, these activation functions are
\startmodif
supposed to be bounded
\stopmodif
(as in \cite{faugeras} for example). This boundedness reflects the fact that the activity of a given neuronal population cannot exceed a certain value due to biological considerations. Consequently, the boundedness of the activation functions does not induce a too demanding additional requirement in practice. In particular, this boundedness requirement holds naturally for the modeling of the neuronal populations involved in the generation of pathological oscillations related to Parkinson's disease, 
\startmodif
which is the main scope of \cite{CHAILLET2017262, 10.3389/fnins.2015.00237}.
\stopmodif


\startmodif
\startmodiff
Nevertheless, neural fields are sometimes used with unbounded 
activation functions $S_i$, such as Rectified Linear Units (ReLU). Then,
\stopmodiff
Theorem~\ref{th:main} does not apply but two additional results can be given.
Firstly, if they are linear, then $\sigma$, $W$ and $\rho$ in Lemma~\ref{lem:main} are also linear. Hence the closed-loop \eqref{eq:z}-\eqref{eq:cont} admits an equilibrium if and only if $f$ lies in the range of the linear operator $x\mapsto x+\sigma(x) - W(\rho(x))$, and it is unique if and only if the operator is injective.
Secondly, if the map $\pi$ is a contraction, then the existence of a unique fixed point of $\pi$ (hence of $G$) follows from the Banach fixed-point theorem (instead of Schaefer's) with no boundedness assumption on the maps $S_i$.
\stopmodif

Note that \AC{Lemma~\ref{lem:main}} allows to take into account more general
neural fields than \eqref{eq:z} and more general
feedback laws than \eqref{eq:cont}. In particular, higher dimensional models (with state $(z_i)_{1\leq i\leq N}$, \AC{$N\in\mathbb N$}) as well as nonlinear feedback laws can be considered. The only assumption to check is that $\sigma$ remains a continuous monotone operator, mapping bounded sets to bounded sets, or more generally that $H:x\mapsto x+\sigma(x)$ has a continuous inverse.

\startmodif
Instead of the partial proportional feedback \eqref{eq:cont}, a partial proportional-integral feedback of the form
\begin{equation}\label{eq:cont_int}
\begin{aligned}
    \dot y_1(r, t) &= z_1(r, t)-z_{\textit{ref}}(r),\\
    u(r, t) &= - k_P(z_1(r, t)-z_{\textit{ref}}(r))- k_I y_1(r, t),
\end{aligned}
\end{equation}
\startmodiff
where $k_P$ and $k_I$ denote non-negative gains,
\stopmodiff
can also be considered.
In that case, if $z_{\textit{ref}}$ lies in the image of $L^2(\Omega; R)$ by $S_1$ and under the conditions of Theorem~\ref{th:main}, the closed-loop system \eqref{eq:z}-\eqref{eq:cont_int} admits at least one equilibrium $(z_1^\star, z_2^\star, y_1^\star)\in L^2(\Omega; \R)^3$. Moreover, $z_1^\star = z_{\textit{ref}}$. The proof follows directly from applying Theorem~\ref{th:main} (or \cite[Theorem 3.6]{faugeras}) to the $z_2$-subsystem to find $z_2^\star$ satisfying $z_2^\star(r) = S_2(I_2^\star(r)
+\int_\Omega w_{21}(r, r')z_{\textit{ref}}(r')\dd r'
+\int_\Omega w_{22}(r, r')z_{2}^\star(r')\dd r'
)$ and then setting $y_1^\star$ such that
$z_{\textit{ref}}(r)=
S_1(
I_1^\star(r) - k_I\alpha(r) y_1^\star(r)
+\int_\Omega w_{11}(r, r')z_{\textit{ref}}(r')\dd r'
+\int_\Omega w_{12}(r, r')z_2^\star(r')\dd r'
)
$.
Note that the asymptotic behaviour of the closed-loop system \eqref{eq:z}-\eqref{eq:cont_int} has not been investigated, contrarily to \eqref{eq:z}-\eqref{eq:cont}, which is considered in \cite{CHAILLET2017262}.
\stopmodif

Sufficient conditions are given in \cite{CHAILLET2017262} for the 
\startmodiff
input-to-state stability (ISS)
\stopmodiff
of \eqref{eq:z}-\eqref{eq:cont} at some equilibrium point under the assumption of the existence of an equilibrium. Naturally, this implies the uniqueness of the equilibrium point, hence of the fixed point of $T$.


Under the additional assumption that $I_i$, $S_i$, $\alpha$ and $z_\textrm{ref}$ are continuous maps, it can be proved that $T$ defines a mapping from $C(\Omega; \R)^2$ into itself, and admits a fixed point in $C(\Omega; \R)^2$.
Indeed, following the proof of Lemma~\ref{lem:main}, the only missing assumptions are that $X=C(\Omega; \R)^2$ is not a Hilbert space but a Banach space, and $\sigma$ is not monotone. However, the map $H:C(\Omega; \R)^2\to C(\Omega; \R)^2$ defined by $H(x) := x+\sigma(x)$ still admits a continuous inverse. Therefore, the conclusion of Lemma~\ref{lem:main} remains valid.
In particular, if the fixed point given in Theorem~\ref{th:main} (\emph{a priori} lying in $L^2(\Omega; \R)^2$) is unique due to the ISS property shown in \cite{CHAILLET2017262}, then it actually lies $C(\Omega; \R)^2$.

\startmodif

Despite the generality of the fixed-point approach developed in Lemma \ref{lem:main}, our result does not solve the question of existence of an equilibrium in cases where the maps $S_i$ are unbounded. In particular, ReLU activation functions such as $S_i:x\mapsto \max(0, x)$ (used to model neurons of the visual cortex for example, see \cite{heeger1992half}) do not fall within our framework. This question could be investigated in future works.

\stopmodif

\bibliographystyle{plain}        
\bibliography{references}           

\end{document}